\begin{document}

\title{Uniqueness of First Passage Time Distributions via Fredholm Integral Equations}

\author{Sören Christensen\thanks{Kiel University, \emph{Email: christensen@math.uni-kiel.de}}, Simon Fischer\thanks{Kiel University, \emph{Email: fischer@math.uni-kiel.de}}, Oskar Hallmann\thanks{Kiel University, \emph{Email: o.hallmann@math.uni-kiel.de}}}
\date{\today}
\maketitle

\begin{abstract}
Let $W$ be a standard Brownian motion with $W_0 = 0$ and let $b: \rr_+ \to \rr$ be a continuous function with $b(0) > 0$. The first passage time (from below) is then defined as
\begin{align*}
\tau := \inf \{ t \geq 0 \vert W_t \geq b(t) \}.
\end{align*}
It is well-known that the distribution $F$ of $\tau$ satisfies a set of Fredholm equations of the first kind, which is used, for example, as a starting point for numerical approaches. 
For this, it is fundamental that the Fredholm equations have a unique solution. In this article, we prove this in a general setting using analytical methods. 
\end{abstract}

\textbf{Keywords:} first passage time problem, Brownian motion, Fredholm integral equations, uniqueness \vspace{.8cm}

\textbf{Subject Classifications AMS MSC 2020:} 60J65; 45B05; 60G40

\section{The First Passage Time Problem and Fredholm Intgral Equations}
Let $W$ be a standard Brownian motion with $W_0 = 0$ and let $b: \rr_+ \to \rr$ be a continuous function with $b(0) > 0$. The first passage time (from below) is then defined as
\begin{align*}
\tau := \inf \{ t \geq 0 \vert W_t \geq b(t) \}.
\end{align*}
The first passage time (FPT) problem seeks to determine the distribution $F$ of $\tau$, i.e., $F(t) = P(\tau \leq t)$. 
The FPT problem has a long history and can for example be traced back as far as joint work by Kolmogorov and Khintchine which was later published in \cite{Khi33}. However, closed form solutions for $F$ (given $b$) are rare. Apart from the well known Bachelier-L\'evy formula for linear boundaries, there are closed solutions for the square-root boundary (see e.g., \cite{Bre67}, \cite{RSS84} and \cite{NFK99}) and for the quadratic boundary in terms of Airy functions (see e.g., \cite{Sal88} and \cite{Gro89}). One method for producing analytically solvable examples is the method of images, see \cite{Ler86}, but its use for treating given barriers is limited, see e.g., \cite{LRD02}, \cite{Zip13}. 

%The widely famous method of images was introduced by \cite{LER86} among others and finds hitting distributions for implicitly given boundaries $b$. As these boundaries cannot be recovered in closed form in most cases, there have been numerical approximations to obtain hitting distributions for explicitly given boundaries $b$ (see e.g., \cite{LRD02}, \cite{ZIP13}).\myemph
Another approach to the FPT problem is by integral equations (typically Volterra and Fredholm integral equations) which connect $b$ and $F$. In \cite{Pes02} the author presents a generalised approach to derive Volterra integral equations. While these equations are in most cases difficult to solve analytically, there are numerical approaches to solve them (see e.g., \cite{Dur71}, \cite{Smi72}) and semi-closed forms for the solutions which are still hard to compute explicitly except in special cases (see e.g., \cite{PP74}). 

Fredholm type integral equations have also been used but more rarely. In \cite{She67} and \cite{Nov81}, Fredholm type integral equations helped to formulate known integral transformations of $F$ and are used to determine moments of $\tau$ as well as asymptotic behaviour of $F$. In \cite{Dan00} Fredholm type integral equations are used to derive FPT distributions for perturbations of linear boundaries. \cite{JKV09} generalises the approach from \cite{Pes02} to link Volterra and Fredholm equations characterising $F$. And while \cite{JKV09} gave results when Volterra type integral equations determine $F$ uniquely, no uniqueness results for Fredholm type integral equations were found. Recently, \cite{CMW19} concerned themselves with numerical solutions to Fredholm integral equations and applied these to the FPT problem. But to the best of our knowledge, the problem whether Fredholm type integral equations uniquely determine $F$ has not been treated.

We denote the transition kernel of  $W$ by $p$ and assume $b:\rr^{+}\to \rr$ to be continuously differentiable with linear growth, i.e., $\limsup_{t\to \infty}\frac{b(t)}{t}\leq d$ for some $d \in \RR$. This implies that there exist $M,m>0$ such that $b(t)\leq M+mt$. There are different approaches to establishing a Fredholm-type integral equation for the distribution $F$ of $\tau$. A direct way is as follows: 
 Using the strong Markov property, we obtain
\begin{align}
p((0,0),(t,x)) &= P(W_t\in \d x) \nonumber \\
&= \int_{0}^{t}P_{(s,b(s))}(W_t\in \d x) P(\tau\in \d s) + P(W_t\in \d x, \tau>t).
\end{align}
Since $b$ is continuously differentiable, $P(\tau\in \d s)$ has a continuous density function $f$ (see e.g., \cite{Fer82}). Then,
\begin{align}
1 & = \frac{p((0,0),(t,x))}{p((0,0),(t,x))} 
= \frac{\int_{0}^{t}P_{(s,b(s))}(W_t\in \d x)f(s)\d s}{p((0,0),(t,x))} 
+\frac{P(W_t\in \d x, \tau>t)}{p((0,0),(t,x))}.
\end{align}
For $c > d$ we set $x = c t$ and let $t\to \infty$. The last term can be reformulated as
\begin{align*}
\frac{P(W_t\in\d (ct), \tau>t)}{p((0,0),(t,ct))} 
%& = \frac{p((0,0),(t,ct)) P(\tau>t\mid W_t = ct)}{p((0,0),(t,ct))} \\
& = P(\tau>t\mid W_t = ct)\xrightarrow{t\to \infty} 0 
\end{align*}
since $\limsup_{t\to \infty}\frac{b(t)}{t}\leq d$ and $c > d$. On the other hand, using linear growth of $b$ and the dominated convergence theorem, the first term can be rewritten as
\begin{align*}\lim_{t\to \infty}\frac{\int_{0}^{t}P_{(s,b(s))}(W_t\in \d (ct))f(s)\d s}{p((0,0),(t,ct))}  &= \lim_{t\to \infty}\int_{0}^{t}\frac{p((s,b(s)),(t,ct))f(s)}{p((0,0),(t,ct))}\d s\\
&= \int_{0}^{\infty}e^{-\frac{c^2}{2}s + c b(s)}f(s)\d s.
\end{align*}
Combining this, we obtain
\begin{equation}\label{eq:hit1}
1 = \int_{0}^{\infty}e^{-\frac{c^2}{2}s + c b(s)}f(s)\d s
 \end{equation}
for all $c>d$ such that the integral exists. A visualization of the setting can be seen in Figure \ref{fig:setting1}.
    \begin{figure}[htbp]
        \centering
        \includegraphics[width=0.7\textwidth]{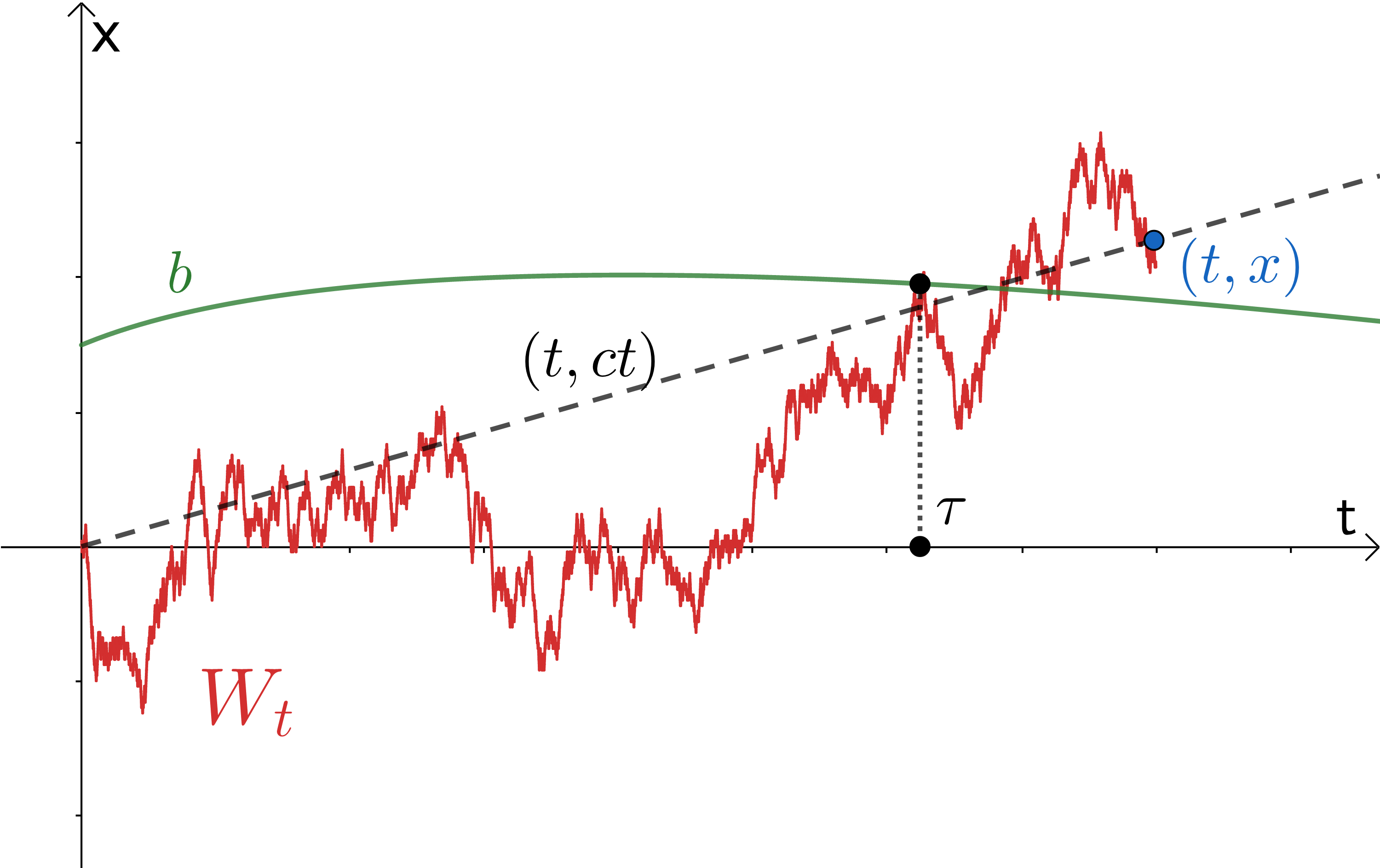}
        \caption{Visualization of the setting}
        \label{fig:setting1}
    \end{figure}
Now, the task is: For a given boundary $b$ we would like to find $f$ with $\int_{0}^{\infty}f(s)\d s = P(\tau < \infty)$, such that \eqref{eq:hit1} holds for all $c$.

\section{Uniqueness}
A central question is if solutions to \eqref{eq:hit1} are unique, i.e., if \eqref{eq:hit1} fully characterizes the distribution of $\tau$. Our ansatz is inspired by recent results for the Fredholm representation for discounted stopping problems with finite time horizon in \cite{CF21}. We use the following notation
for $J\subset (-\infty,0]$ measurable and $n\in\nn_{0}$:
\begin{align*}
    & I^{f}_{J}(n)(c) := \int_{J}e^{-\frac{c^2}{2}s + c b(s)}s^{n}f(s)\d s\\
    & I^{f}(n)(c): = I^{f}_{[0,\infty)}(n)(c)\\
    & I^{f}_{J}(c): = I^{f}_{J}(0)(c),
\end{align*}
and for two different continuous functions $f$ and $\tilde f$
\begin{align*}
   & D_{J}(n)(c) := I^{f}_{J}(n)(c) - I^{\tilde f}_{J}(n)(c)\\ 
   & D(n)(c) := D_{[0,\infty)}(n)(c).
\end{align*}

\begin{lemma}\label{lem:hit1}
     Let $t\geq 0$ be fixed and $\varepsilon >0$, then
        \[\lim_{c\to \infty}\frac{I_{[t,\infty)}^{f}(n)(c)}{I^{f}_{[t,t+\varepsilon]}(n)(c)} = 1.\]
    If additionally $f(t) \neq \tilde f(t)$, then
        \[\lim_{c\to \infty}\frac{D_{[t,\infty)}(n)(c)}{D_{[t,t+\varepsilon]}(n)(c)} = 1.\]
\end{lemma}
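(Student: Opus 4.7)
The strategy is a Laplace-type concentration argument: for large $c$, the integrand $e^{-c^{2}s/2 + cb(s)}$ decays rapidly in $s$ on $[t,\infty)$ because the quadratic term $-c^{2}s/2$ dominates, so almost all of the mass of $I^{f}_{[t,\infty)}(n)(c)$ sits in any right-neighbourhood of $t$. Writing $I^{f}_{[t,\infty)}(n)(c) = I^{f}_{[t,t+\varepsilon]}(n)(c) + I^{f}_{[t+\varepsilon,\infty)}(n)(c)$, the first claim reduces to showing $I^{f}_{[t+\varepsilon,\infty)}(n)(c) / I^{f}_{[t,t+\varepsilon]}(n)(c) \to 0$ as $c \to \infty$.

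For the tail, I would first fix $c_{1} > d$ large enough that $I^{f}(n)(c_{1}) < \infty$. Such a $c_{1}$ exists because \eqref{eq:hit1} gives $I^{f}(0)(\bar c) = 1$ for every $\bar c > d$, and the polynomial factor $s^{n}$ can be absorbed into the exponential after slightly enlarging $\bar c$ to $c_{1}$: the linear growth $b(s) \le M + ms$ makes $s^{n}\,e^{-(c_{1}^{2}-\bar c^{2})s/2 + (c_{1}-\bar c)(M+ms)}$ bounded in $s$ as soon as $c_{1}+\bar c > 2m$. For $c > c_{1}$ and $s \ge t+\varepsilon$, using $b(s) \le M+ms$ together with the fact that $s \mapsto (c-c_{1})(M+ms) - (c^{2}-c_{1}^{2})s/2$ is decreasing (for $c$ large, so that its $s$-coefficient is negative), I obtain the pointwise bound
\[
-\tfrac{c^{2}s}{2} + cb(s) \le -\tfrac{c_{1}^{2}s}{2} + c_{1}b(s) + (c-c_{1})\bigl(M+(t+\varepsilon)m\bigr) - \tfrac{(c^{2}-c_{1}^{2})(t+\varepsilon)}{2},
\]
which on integration against $s^{n}f(s)\ge 0$ yields $I^{f}_{[t+\varepsilon,\infty)}(n)(c) \le I^{f}(n)(c_{1}) \cdot e^{-(t+\varepsilon)c^{2}/2 + O(c)}$.

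For the lower bound on the denominator, I would pick a subinterval $[t',t'+\delta] \subset [t,t+\varepsilon]$ on which the continuous function $s^{n}f(s)$ is bounded below by some $L > 0$; this is where the natural non-degeneracy hypothesis that $f$ does not vanish identically near $t$ enters. With $b_{\min}$ the minimum of the continuous function $b$ on this subinterval,
\[
I^{f}_{[t,t+\varepsilon]}(n)(c) \ge L\, e^{c b_{\min}} \int_{t'}^{t'+\delta} e^{-c^{2}s/2}\, \d s \ge \tfrac{L}{c^{2}}\, e^{-t'c^{2}/2 + c b_{\min}}
\]
for $c$ large, by the elementary Gaussian integral estimate. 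Combining the two bounds, the tail ratio is at most $C\, c^{2}\, e^{-(t+\varepsilon-t')c^{2}/2 + O(c)}$ for some constant $C$, which tends to $0$ since $t+\varepsilon - t' > 0$.

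For the second assertion, assume without loss of generality that $f(t) > \tilde f(t)$. By continuity, $f - \tilde f \ge (f(t) - \tilde f(t))/2 > 0$ on some $[t,t+\delta] \subset [t,t+\varepsilon]$, which gives the same kind of lower bound for $D_{[t,t+\varepsilon]}(n)(c)$ as in the first part, applied to the continuous function $f - \tilde f$. For the tail, the triangle inequality $|D_{[t+\varepsilon,\infty)}(n)(c)| \le I^{f}_{[t+\varepsilon,\infty)}(n)(c) + I^{\tilde f}_{[t+\varepsilon,\infty)}(n)(c)$ reduces everything to the tail bound above, applied separately to $f$ and to $\tilde f$ (which inherits the required integrability as another solution of \eqref{eq:hit1}). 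The main technical point I anticipate is the bootstrap for finiteness of $I^{f}(n)(c_{1})$, which is not immediate from \eqref{eq:hit1} and genuinely uses the linear growth of $b$ to absorb the polynomial factor $s^{n}$.
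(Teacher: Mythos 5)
Your proof of the first claim is correct and tracks the paper's Laplace-concentration strategy, with a somewhat different tail estimate: rather than bounding $s^n \le n^n e^{-n}e^s$ and $f \le f_1$ and integrating $e^{-c^2s/2+c(M+ms)+s}$ explicitly, you compare to a fixed reference value $c_1>d$ and invoke finiteness of $I^f(n)(c_1)$, which you correctly bootstrap from $I^f(0)(\bar c)=1$ by absorbing the polynomial $s^n$ into the exponential using the linear growth of $b$. This bootstrap is a detail the paper does not spell out and is worth having; it also avoids justifying that $f$ attains a finite maximum on the tail. Your lower bound on the denominator is the same in spirit, and the ``non-degeneracy'' hypothesis you flag (that $f$ does not vanish identically near $t$) is implicitly present in the paper as well, where it is justified by strict positivity of the first-passage density under the local Lipschitz condition on $b$.

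Your proof of the second claim has a gap. You choose $\delta>0$ so that $f-\tilde f>0$ on $[t,t+\delta]$ and assert that this gives ``the same kind of lower bound for $D_{[t,t+\varepsilon]}(n)(c)$ \dots applied to the continuous function $f-\tilde f$.'' But the first-part lower bound works by discarding the integral outside the chosen subinterval, which is only valid for a nonnegative integrand; here $f-\tilde f$ may be negative on $(t+\delta,t+\varepsilon]$, so $D_{[t,t+\varepsilon]}(n)(c)\ge D_{[t,t+\delta]}(n)(c)$ does not hold a priori. The paper handles this case ($\varepsilon>\delta$) explicitly by writing $D_{[t,t+\varepsilon]}=D_{[t,t+\delta]}+D_{[t+\delta,t+\varepsilon]}$, bounding $|D_{[t+\delta,t+\varepsilon]}(n)(c)|\le I^{\max(f,\tilde f)}_{[t+\delta,\infty)}(n)(c)$, and noting that this term is of order $e^{-(t+\delta)c^2/2+O(c)}$, hence negligible against the lower bound of order $c^{-2}e^{-tc^2/2+O(c)}$ on $D_{[t,t+\delta]}$, so the denominator is eventually positive and of the expected size. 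You already have the needed tail estimate; the step just has to be carried out on the intermediate piece rather than claimed by analogy.
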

\begin{proof}
    Let $(c_i)$ be a sequence such that $c_i \to \infty$ for $i\to \infty$. We first show that for all $\varepsilon >0$ and $t\geq 0$ it holds
    \begin{equation}\label{eq:i1}
           \lim_{i\to \infty}\frac{I_{[t,\infty)}^{f}(n)(c_i)}{I^{f}_{[t,t+\varepsilon]}(n)(c_i)} = 1.
    \end{equation}  
%    Heuristically, this means that only the parts with the largest variances (i.e., $\frac{1}{-s}$) govern the limits.
    Equation \eqref{eq:i1} is equivalent to
        \[\lim_{i\to \infty}\frac{I^{f}_{[t+\varepsilon,\infty)}(n)(c_i)}{I	^{f}_{[t,t+\varepsilon]}(n)(c_i)} = 0.\]
    The numerator and denominator are positive. Let us derive an upper bound for $I^{f}_{[t+\varepsilon,\infty)}(n)(c)$.  Since $f$ is continuous and integrable we can set $f_1 = \max\{f(s) \mid s\in[t +\varepsilon,\infty)\}$. Furthermore, we use that $s^{n}e^{-s}\leq n^{n}e^{-n}$. For $c$ large enough we have
    \begin{align*}
        I^{f}_{[t+\varepsilon,\infty)}(n)(c)
        & =\int_{t+\varepsilon}^{\infty}e^{-\frac{1}{2}c^2s+c b(s)}s^{n} f(s)\d s\\
        & \leq n^{n}e^{-n} f_1\int_{t+\varepsilon}^{\infty}e^{-\frac{1}{2}c^2s+c (M+m s)+s}\d s \\
        & = \frac{n^{n}e^{-n} f_1}{\frac{c^2}{2}-1-cm}e^{(t+\varepsilon)(cm - \frac{c^2}{2}-1)+ cM}.
    \end{align*}
    We now derive a lower bound for $I^{f}_{[t,t+\varepsilon]}(n)(c_i)$. The function $b$ is continuous, so we can set $b_1 : = \min \{b(s) \mid s\in[t+\frac{\varepsilon}{2} ,t+\varepsilon]\}$. Furthermore, let $f_2 : = \min \{f(s) \mid s\in[t+\frac{\varepsilon}{2} ,t+\varepsilon]\}$. Note that $f_2>0$, since $b$ is locally Lipschitz continuous. We have
     \begin{align*}
        I^{f}_{[t,t+\varepsilon]}(n)(c)
        & =\int_{t}^{t+\varepsilon}e^{-\frac{1}{2}c^2s+c b(s)}s^{n} f(s)\d s\\
        & \geq f_2\left(t+\frac{\varepsilon}{2}\right)\int_{t+\frac{\varepsilon}{2}}^{t+\varepsilon}e^{-\frac{1}{2}c^2s+c b_1}\d s\\
        & = f_2\left(t+\frac{\varepsilon}{2}\right)\frac{2}{c^2}\left(e^{-\frac{c^2}{2}\left(t+\frac{\varepsilon}{2}\right) + cb_1} - e^{-\frac{c^2}{2}(t+\varepsilon) + cb_1}\right).
    \end{align*}
    Putting these results together we find
    \begin{align*}
        0&\leq \lim_{i\to \infty}\frac{I^{f}_{[t+\varepsilon,\infty)}(n)(c_i)}{I^{f}_{[t,t+\varepsilon]}(n)(c_i)}\\
        &\leq \lim_{i\to \infty} \frac{\frac{n^{n}e^{-n} f_1}{\frac{c_i^2}{2}+1-cm}e^{(t+\varepsilon)(c_im - \frac{c_i^2}{2}-1)+ c_iM}}{f_2\left(t+\frac{\varepsilon}{2}\right)\frac{2}{c_i^2}\left(e^{-\frac{c_i^2}{2}\left(t+\frac{\varepsilon}{2}\right) + c_ib_1} - e^{-\frac{c_i^2}{2}(t+\varepsilon) + c_ib_1}\right)} = 0
    \end{align*}   
    which shows the first claim. \\
    Let now $f(t) \neq \tilde f(t)$. We assume w.l.o.g.\ that $f(t) > \tilde f(t)$. Since $f$ and $\tilde f$ are continuous, there exists $\delta>0$ such that $f(s) > \tilde f(s)$ for all $s\in [t,t+\delta]$.
    Then, the term
        \[D_{[t,t+\delta]}(n)(c_i) = \int_{t}^{t+\delta}e^{-\frac{1}{2}c^2s+c b(s)}s^{n} (f(s) - \tilde f(s))\d s\\\]
    is positive and analogously to the calculations above we obtain 
        \[ \lim_{i\to \infty}\frac{|D_{[t+\delta, \infty)}(n)(c_i)|}{D_{[t,t+\delta]}(n)(c_i)} 
        \leq  \lim_{i\to \infty}\frac{I^{\max(f,\tilde f)}_{[t+\delta, \infty)}(n)(c_i)}{D_{[t,t+\delta]}(n)(c_i)} = 0.\]
    If $\varepsilon \leq \delta$ we can set $\delta = \varepsilon$ and we are done. If $\varepsilon > \delta$, we have
    \begin{align*}
     D_{[t,t+\varepsilon]}(n)(c_i)& = 
      \int_{t}^{t+\delta}e^{-\frac{1}{2}c_i^2s+c_i b(s)}s^{n} (f(s) - \tilde f(s))\d s\\ 
     & \quad + \int_{t+\delta}^{t+\varepsilon}e^{-\frac{1}{2}c_i^2s+c_i b(s)}s^{n} (f(s) - \tilde f(s))\d s \\
     &  \geq \int_{t}^{t+\delta}e^{-\frac{1}{2}c_i^2s+c_i b(s)}s^{n} (f(s) - \tilde f(s))\d s\\
     &\quad - \int_{t+\delta}^{t+\varepsilon}e^{-\frac{1}{2}c_i^2s+c_i b(s)}s^{n} \max(f(s), \tilde f(s))\d s
    \end{align*}
    where the integrand of the first term is non-negative and for $c_i$ large enough the whole rhs term is non-negative. Using this inequality, we obtain 
    \begin{align*}
         \lim_{i\to \infty}\frac{|D_{[t+\varepsilon,\infty)}(n)(c_i)|}{D_{[t,t+\varepsilon]}(n)(c_i)} 
        \leq \lim_{i\to \infty}\frac{|D_{[t+\varepsilon,\infty)}(n)(c_i)|}{D_{[t,t+\delta]}(n)(c_i) - I^{\max(f,\tilde f)}_{[t+\delta,\infty)}(n)(c_i)}
         = 0.
    \end{align*}
    For $c$ large enough the denominator is positive, hence, it follows     
        \[\lim_{i\to \infty}\frac{D_{[t,\infty)}(n)(c_i)}{D_{[t,t+\varepsilon]}(n)(c_i)} = 1.\]
\end{proof}

\begin{lemma}\label{lem:hit2}
    If $I^{f}(c) = I^{\tilde f}(c)$ for all $c>C$ for some constant $C>0$, then
        \[I^{f}(n)(c) = I^{\tilde f}(n)(c),\]
    for all $n\in\nn_{0}$ and $c>C$ s.t.\ the integral exists.
\end{lemma}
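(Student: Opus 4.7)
The plan is to argue by induction on $n$, with the base case $n=0$ being the hypothesis itself.

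For the inductive step, assume $D(k)(c) := I^f(k)(c) - I^{\tilde f}(k)(c) = 0$ for all $c > C$ and all $k \in \{0, 1, \ldots, n-1\}$, and set $h := f - \tilde f$. Since each $D(k)$ depends on $c$ through the analytic kernel $e^{-\frac{c^2}{2}s + cb(s)}$ and dominated convergence permits differentiation under the integral sign, $D(k)$ is real-analytic on its region of convergence. Hence the vanishing of $D(k)$ on the open interval $(C, \infty)$ forces all derivatives $\frac{d^m}{dc^m} D(k)(c) \equiv 0$ there for every $m \geq 0$.

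The key technical tool I would introduce is the family of mixed moments
\[ M_{k, l}(c) := \int_0^\infty s^k b(s)^l e^{-\frac{c^2}{2}s + cb(s)} h(s)\, ds, \]
so that $M_{k, 0}(c) = D(k)(c)$. A direct computation from $\frac{d}{dc} e^{-\frac{c^2}{2}s + cb(s)} = (b(s) - cs) e^{-\frac{c^2}{2}s + cb(s)}$ yields the recursion
\[ \frac{d}{dc} M_{k, l}(c) = M_{k, l+1}(c) - c M_{k+1, l}(c). \]
Iterating this recursion from $M_{k, 0} \equiv 0$ for $k \leq n-1$ (the inductive hypothesis), I expect to derive first $M_{k, l} \equiv 0$ whenever $k + l \leq n - 1$, and then the triangular chain of identities
\[ M_{n-j, j}(c) = c^j D(n)(c), \qquad j = 0, 1, \ldots, n. \]
Setting $j = n$ gives the key relation $\int_0^\infty b(s)^n e^{-\frac{c^2}{2}s + cb(s)} h(s)\, ds = c^n D(n)(c)$ for all $c > C$, and appropriate alternating-sign combinations yield $\int_0^\infty (s - b(s)/c)^n e^{-\frac{c^2}{2}s + cb(s)} h(s)\, ds = 0$.

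To conclude $D(n)(c) = 0$, I would apply Lemma \ref{lem:hit1} (to $f$ and $\tilde f$ separately, since $h$ itself may change sign) to localize all integrals involved to a neighborhood $[0, \varepsilon]$ of $s = 0$ as $c \to \infty$. In this neighborhood, $b(s) \approx b(0)$ and a Laplace-method expansion yields asymptotic scalings in $c$ for both sides of the key identity $M_{0, n}(c) = c^n D(n)(c)$. The left-hand side behaves like $b(0)^n$ times the leading behavior of an integral of order $e^{cb(0)}/c^2$, while the right-hand side carries an extra factor of $c^n$ multiplying an integral of order $e^{cb(0)}/c^{2n+2}$. These orders are incompatible unless the coefficients vanish, which together with the analyticity of $D(n)$ in $c$ should propagate exact vanishing to all of $(C, \infty)$.

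The main obstacle is the final analytic step: the asymptotic argument gives that $D(n)(c)$ decays rapidly as $c \to \infty$, but upgrading this to identical vanishing requires care. Since $h$ is only continuous—not smooth—the Laplace-method expansion must be carried out carefully at each order without appealing to Taylor coefficients of $h$ at $0$, and one may need either a bootstrap using the whole family of identities $M_{n-j, j}(c) = c^j D(n)(c)$ simultaneously, or a complex-analytic (Phragmén-Lindelöf-type) argument exploiting the holomorphic extension of $D(n)$ into a right half-plane.
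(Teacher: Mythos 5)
Your algebraic setup is correct: the recursion $\frac{d}{dc}M_{k,l} = M_{k,l+1} - cM_{k+1,l}$, the fact that $M_{k,l}\equiv 0$ for $k+l\le n-1$, and the triangular chain $M_{n-j,j}(c) = c^j D(n)(c)$ all check out. This machinery is, however, more than the paper needs: the paper's proof uses exactly one differentiation per inductive step, which in your notation is nothing but the $j=1$ identity $M_{n,1}(c) = cD(n+1)(c)$ (after subtracting $b(0)D(n)=0$). You have correctly identified the real obstacle yourself, and it is a genuine gap, not a detail: the plan to conclude from $M_{0,n}(c)=c^n D(n)(c)$ via a Laplace expansion at $s=0$ does not work as stated.

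Two specific problems. First, you propose applying Lemma~\ref{lem:hit1} to $f$ and $\tilde f$ separately ``since $h$ may change sign.'' That gives you localization of $I^f(n)$ and $I^{\tilde f}(n)$ individually, but inside $[0,\varepsilon]$ the difference $D(n)$ can still exhibit arbitrary cancellation, so you get no lower bound on $|D(n)|$ in terms of $\int s^n e^{\cdots}|h|$. The second clause of Lemma~\ref{lem:hit1}, applied to $D$ itself at a point $t$ where $f(t)\neq\tilde f(t)$, is precisely what controls this cancellation, and the paper's proof is built on it. Second, your stated asymptotic orders ($e^{cb(0)}/c^2$ on the left versus $c^n\cdot e^{cb(0)}/c^{2n+2}$ on the right) implicitly treat $h$ as order one near $s=0$; but the base case $D(0)\equiv 0$ already forces $h(0)=0$ (by the same localization that proves Lemma~\ref{lem:hit1}), so the leading Laplace coefficient vanishes and the claimed ``incompatible orders'' are not established. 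You correctly flag that since $h$ is only continuous you cannot expand it in Taylor coefficients, and this is exactly where the proposal stalls.

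The paper sidesteps all of this. It writes the one-step identity in the form
\[
 c\,D(n+1)(c) \;=\; \int_0^\infty \bigl(b(s)-b(0)\bigr)\,s^n e^{-\frac{c^2}{2}s + c(b(s)-b(0))}\,h(s)\,ds,
\]
assumes for contradiction that $D(n+1)(c_i)\neq 0$ along a sequence $c_i\to\infty$ (using analyticity in $c$), and bounds the resulting ratio $c_i = (\text{r.h.s.})/D(n+1)(c_i)$. Using the local Lipschitz bound $|b(s)-b(0)|\le Ls$ and the \emph{second} part of Lemma~\ref{lem:hit1} applied to $D$ at the first point $t^\ast=\inf\{s: f(s)\neq\tilde f(s)\}$, the numerator localizes to a small interval where $h$ has a fixed sign, where it is dominated by $L\cdot D_{[t^\ast,t^\ast+\delta]}(n+1)$, and the denominator is asymptotically that same local quantity. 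Hence $c_i$ stays bounded by (essentially) the local Lipschitz constant, contradicting $c_i\to\infty$. No Phragm\'en--Lindel\"of argument, no higher-order Laplace expansion, and no Taylor smoothness of $h$ is required. To repair your proposal, replace the $j=n$ identity and the Laplace expansion at $s=0$ by the $j=1$ identity together with the difference-localization clause of Lemma~\ref{lem:hit1} at $t^\ast$, and run the bounded-ratio contradiction.
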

\begin{proof}
The proof is by induction, i.e., we assume that
 \[\int_{0}^{\infty}e^{-\frac{c^2}{2}s + c b(s)}s^n f(s)\d s = \int_{0}^{\infty}e^{-\frac{c^2}{2}s + c b(s)}s^n \tilde f(s)\d s\]
 for all $c>C$. Multiplying both sides with $e^{-cb(0)}$ we obtain
  \[\int_{0}^{\infty}e^{-\frac{c^2}{2}s + c (b(s) -b(0))}s^n f(s)\d s = \int_{0}^{\infty}e^{-\frac{c^2}{2}s + c (b(s)-b(0))}s^n \tilde f(s)\d s.\]
  Taking derivatives w.r.t.\ $c$ on both sides, it follows
  \begin{align}
   & \int_{0}^{\infty}(-cs + b(s) -b(0))e^{-\frac{c^2}{2}s + c (b(s) -b(0))}s^n f(s)\d s \nonumber\\
   = & \int_{0}^{\infty}(-cs + b(s) -b(0))e^{-\frac{c^2}{2}s + c (b(s)-b(0))}s^n \tilde f(s)\d s.\label{eq:hitder1}
  \end{align}
We assume that $I^{f}(n+1) \neq I^{\tilde f}(n+1)$. Since $I^{f}(n+1)(c)$ is analytic, we find a sequence $c_i \to \infty$ such that $I^{f}(n+1)(c_i) \neq I^{\tilde f}(n+1)(c_i)$. Rearranging \eqref{eq:hitder1} this leads to
\begin{equation}\label{eq:hitci}
 c_i = \frac{\int_{0}^{\infty}e^{-\frac{c_i ^2}{2}s + c_i  (b(s) -b(0))}(b(s) -b(0))s^n (\tilde f(s)-f(s))\d s }{\int_{0}^{\infty}e^{-\frac{c_i ^2}{2}s + c_i  (b(s) -b(0))}s^{n+1} (f(s)-\tilde f(s))\d s}.
\end{equation}
Since $b$ is Lipschitz continuous, we find $L>0$ such that $|b(t) - b(0)|\leq L t$. Let us assume that there is $\varepsilon>0$ such that $f(t) = \tilde f(t)$ for $t\in (0,\varepsilon)$. Using Lemma \ref{lem:hit1}, we obtain 
\begin{align*}
 \infty & = \lim_{i\to\infty} c_i 
 =\lim_{i\to\infty} \frac{\int_{0}^{\infty}e^{-\frac{c_i ^2}{2}s + c_i  b(s)}(b(s) - b(0))s^n (\tilde f(s)-f(s))\d s }{\int_{0}^{\infty}e^{-\frac{c_i ^2}{2}s + c_i  b(s)}s^{n+1} (f(s)-\tilde f(s))\d s}\\
 & \leq \lim_{i\to\infty} \frac{\int_{0}^{\varepsilon}e^{-\frac{c_i ^2}{2}s + c_i  b(s))}L s^{n+1} (\tilde f(s)-f(s))\d s }{D(n+1)}\\
 &\quad +  \lim_{i\to\infty} \frac{\int_{\varepsilon}^{\infty}e^{-\frac{c_i ^2}{2}s + c_i  b(s))}\big((M+ms) s^{n} \big)(\tilde f(s)-f(s))\d s }{D(n+1)}\\
 &= L +0 <\infty.
\end{align*}
This is a contradiction to the assumption that $f\neq \tilde f$.
Let us now consider the case that $\tilde f = f$ in a neighbourhood of 0. We set $t^{\ast}: = \inf\{s\mid \tilde f(s)\neq f(s)\}$. 
Since $f$ and $\tilde f$ are continuous, we find sequences $t_j$ and $\varepsilon_j$ such that $t_j\to t^{\ast}$ and $f(t)\neq \tilde f(t)$ for $t\in [t_j, t_j+\varepsilon_j]$. 
In the same way as before it follows that
\begin{align*}
\lim_{i\to\infty} \frac{\int_{t_j}^{\infty}e^{-\frac{c_i ^2}{2}s + c_i  b(s)}(b(s) - b(t_j))s^n (\tilde f(s)-f(s))\d s }{\int_{t_j}^{\infty}e^{-\frac{c_i ^2}{2}s + c_i  b(s)}s^{n+1} (f(s)-\tilde f(s))\d s}
 &\leq L_j <\infty.
\end{align*}
By the dominated convergence theorem we find that
\begin{align*}
& \lim_{i\to\infty} \frac{\int_{t^{\ast}}^{\infty}e^{-\frac{c_i ^2}{2}s + c_i  b(s)}(b(s) - b(t^{\ast}))s^n (\tilde f(s)-f(s))\d s }{\int_{t^{\ast}}^{\infty}e^{-\frac{c_i ^2}{2}s + c_i  b(s)}s^{n+1} (f(s)-\tilde f(s))\d s}\\
 &\quad = \lim_{i\to\infty}\lim_{j\to\infty} \frac{\int_{t_j}^{\infty}e^{-\frac{c_i ^2}{2}s + c_i  b(s)}(b(s) - b(t_j))s^n (\tilde f(s)-f(s))\d s }{\int_{t_j}^{\infty}e^{-\frac{c_i ^2}{2}s + c_i  b(s)}s^{n+1} (f(s)-\tilde f(s))\d s}\\
 &\quad = \lim_{j\to\infty} L_j <\infty.
\end{align*}
Again, this is a contradiction to \eqref{eq:hitci} which concludes the proof.
% Using Lemma \ref{lem:hit1} we obtain
% \begin{align*}
%  \infty & = \lim_{i\to\infty} c_i 
%  =\lim_{i\to\infty} \frac{\int_{0}^{\infty}e^{-\frac{c_i ^2}{2}s + c_i  b(s))}(\alpha s + o(s))s^n (\tilde f(s)-f(s))\d s }{\int_{0}^{\infty}e^{-\frac{c_i ^2}{2}s + c_i  b(s)}s^{n+1} (f(s)-\tilde f(s))\d s}\\
%  & =\lim_{i\to\infty} \frac{\int_{0}^{\infty}e^{-\frac{c_i ^2}{2}s + c_i  b(s))}\big(\alpha s^{n+1}+o(s^{n+1}) \big)(\tilde f(s)-f(s))\d s }{D(n+1)}\\
%  &= \lim_{i\to\infty} \frac{\alpha D(n+1) + \int_{0}^{\infty}e^{-\frac{c_i ^2}{2}s + c_i  b(s))}o(s^{n+1} )(\tilde f(s)-f(s))\d s }{D(n+1)}\\
%  &= \alpha + \lim_{i\to\infty} \frac{\int_{0}^{\varepsilon}e^{-\frac{c_i ^2}{2}s + c_i  b(s))}o(s^{n+1} )(\tilde f(s)-f(s))\d s }{D_{[0,\varepsilon]}(n+1)}
%  \ldots \leq \alpha + 1 <\infty.
% \end{align*}
% \textit{Hier noch den Fall $\tilde f = f$ auf $[0,\varepsilon]$ hinyufuegen.}
% This is a contradiction to the assumption that $f\neq \tilde f$, the result follows.
  \end{proof}

We can now state the uniqueness theorem for \eqref{eq:hit1}.
\begin{theorem}\label{th:hit}
 Let $b$ be continuously differentiable with linear growth and $$-\infty <\liminf_{t\to 0}\frac{b(t) -b(0)}{t}\leq \limsup_{t\to 0}\frac{b(t) -b(0)}{t}<\infty.$$ If there exist $d \geq 0$ s.t.\ \eqref{eq:hit1} holds for all $c \geq d$ then the representation defines $f$ uniquely in the class of continuous functions.
\end{theorem}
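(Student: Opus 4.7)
The plan is to convert the infinitely many vanishing-moment identities supplied by Lemma \ref{lem:hit2} into a single Laplace-transform identity, and then conclude by Laplace-transform uniqueness. Let $f$ and $\tilde f$ be two continuous solutions of \eqref{eq:hit1} and set $g := f - \tilde f$, so that by hypothesis $D(0)(c) = 0$ for every $c \geq d$. Applying Lemma \ref{lem:hit2} then upgrades this to $D(n)(c) = 0$ for every $n \in \nn_{0}$ and every $c \geq d$ in the common domain of convergence.

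Next I would fix any $c > \max(d, 2m)$. The linear-growth bound $b(s) \leq M + m s$ then guarantees that the weight $e^{-c^{2} s/2 + c b(s)}$ decays exponentially, so the function $\phi(s) := e^{-c^{2} s/2 + c b(s)} g(s)$ belongs to $L^{1}(0,\infty)$ and its Laplace transform
\begin{equation*}
 G(\lambda) := \int_{0}^{\infty} e^{-\lambda s} \phi(s) \, ds
\end{equation*}
is analytic on the half-plane $\{\mathrm{Re}\,\lambda > cm - c^{2}/2\}$, which contains a neighborhood of the origin. Differentiation under the integral sign, justified by the same exponential-decay bound, gives $G^{(n)}(0) = (-1)^{n} D(n)(c) = 0$ for every $n \in \nn_{0}$, so analyticity forces $G \equiv 0$ on its entire connected domain.

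Since this says that the Laplace transform of $\phi$ vanishes on a half-plane, injectivity of the Laplace transform forces $\phi \equiv 0$ almost everywhere; positivity of the kernel then gives $g \equiv 0$ almost everywhere, and continuity of $g$ finally promotes this to $g \equiv 0$ pointwise, i.e.\ $f \equiv \tilde f$.

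The main technical obstacle will be the integrability bookkeeping: one has to check that the weighted integral defining $G$ has a strip of absolute convergence containing the origin and that term-by-term differentiation at $\lambda = 0$ is permissible, so that the identities $G^{(n)}(0) = (-1)^n D(n)(c)$ really follow from $D(n)(c) = 0$. This is exactly where the linear-growth hypothesis on $b$, the nonnegativity of the candidate densities (which bounds $\lvert g \rvert$ by $f + \tilde f$), and the choice $c > 2m$ enter. Once these routine but slightly fiddly estimates are in place, the passage to Laplace uniqueness is entirely standard.
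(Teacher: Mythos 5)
Your argument is correct and takes a genuinely different route from the paper's, though the two share the same starting point. Both proofs fix a single sufficiently large $c$ and exploit Lemma~\ref{lem:hit2} to conclude that all weighted moments $D(n)(c)$ vanish for that $c$. The paper then passes to the measures with densities $g(s) = e^{s}e^{-c^2 s/2 + cb(s)}f(s)$ and $\tilde g$, tests them against all polynomials $q(s)$, and invokes the density of the Laguerre exponential polynomials $p(-s)e^{-s}$ in $L^{2}([0,\infty))$ to force $g = \tilde g$ a.e. You instead repackage the vanishing moments as the vanishing of all Taylor coefficients at $\lambda = 0$ of the one-sided Laplace transform $G(\lambda) = \int_{0}^{\infty}e^{-\lambda s}\phi(s)\,ds$ of $\phi(s) = e^{-c^{2}s/2 + cb(s)}(f(s)-\tilde f(s))$; the choice $c > 2m$ makes $cm - c^{2}/2 < 0$, so $G$ is holomorphic on a half-plane containing a neighbourhood of $0$, the vanishing derivatives force $G\equiv 0$ by the identity theorem, and injectivity of the Laplace transform gives $\phi\equiv 0$, hence $f = \tilde f$ by positivity of the kernel and continuity. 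The trade-off is small: your version is arguably more elementary, leaning only on standard holomorphy and Laplace injectivity rather than a specific $L^2$-completeness result, while the paper's version makes the determinate-moment-problem viewpoint explicit. Both hinge on the same structural fact, namely that the linear-growth bound $b(s)\le M+ms$ gives exponential decay of the kernel once $c$ is large, and both need the routine check (which you correctly flag as the ``integrability bookkeeping'') that $|f-\tilde f|\le f+\tilde f \in L^{1}$ makes the weighted integrals, and the differentiation under the integral sign, legitimate.
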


\begin{proof}
Let $I^{f}(c) = I^{\tilde f}(c)$ for all $c>d$. By Lemma \ref{lem:hit2} we have $I^{f}(n)(c) = I^{\tilde f}(n)(c)$ for all $c>d$ and $n\in \rr$. We rewrite $I^{f}(n)(c)$ as
        \[I^{f}(n)(c) = \int_{0}^{\infty} e^{-s}s^{n}\mu(ds)
        \]
    with a measure
        $\mu =g \circ \lambda$
    where $g$ denotes the density function
        \[g(s) :=  e^{s}e^{-\frac{1}{2}c^2s+cb(s)}f(s)\]
    and $\lambda$  denotes the Lebesgue measure on $\rr$. A measure $\tilde\mu$ with density function $\tilde g$ is defined analogously via $\tilde f$.
    Recall that the Laguerre exponential polynomials $p(-s)e^{-s}$ lie dense in $L^{2}([0, \infty))$, see \cite[Lemma 1. (ii)]{ANG72}. 
    By Lemma \ref{lem:hit2} and linearity of the integral we have
        \[\int e^{s}q(s) g(s) \d s = \int e^{s}q(s) \tilde g(s) \d s\]
    for all polynomials $q$. Let now $h \in L^{2}([0, \infty))$ and $(q_n)$ be a sequence of polynomials with $e^{-s}q_n \overset{L^2} \to h$. 
    For $c$ large enough, $g$ and $\tilde g$ are positive, bounded and in $L^{2}$. It follows that
        \[\lim_{n\to\infty}\int e^{-s}q_n(s) g(s) \d s 
        = \int h(s) g(s) \d s \]
    and
        \[\lim_{n\to\infty}\int e^{-s}q_n(s) \tilde g(s) \d s 
        =\int h(s) \tilde g(s) \d s,\]
    hence,
        \[\int h(s) \mu(ds) = \int h(s) \tilde \mu(ds)\]
    for all $g \in L^{2}(B)$.\\
    If follows that $\mu = \tilde \mu$ a.e. Since $f$ and $\tilde f$ are continuous we have $f = \tilde f$.
\end{proof}

\begin{example}
 Let $b(t)\equiv b_0>0$. Then
 \[1 = \int_{0}^{\infty}e^{-\frac{c^2}{2}s + c b_0}f(s)\d s\]
 for all $c>0$. We set 
 \[f(s) = \frac{b_0}{\sqrt{2\pi s^3}}e^{-\frac{b_0^2}{2s}},\]
 then 
 \begin{align*}
   &\int_{0}^{\infty}\frac{b_0}{\sqrt{2\pi s^3}}e^{-\frac{c^2}{2}s + c b_0-\frac{b_0^2}{2s}}\d s
  = \int_{0}^{\infty}\frac{b_0}{\sqrt{2\pi s^3}}e^{-\frac{(b_0 - cs)^2}{2s}}\d s = 1,
 \end{align*}
 so $f$ indeed solves the problem. By Theorem \ref{th:hit} we can conclude that $f$ is the density function of the distribution of $\tau$, which is of course a well known result.
\end{example}

\section{Outlook and discussion}

A main contribution of this paper is that the FPT problem has been reduced to the solution of an integral equation because of the uniqueness result and thus the well known methods for such equations can be applied. The integral equation we are faced with here is a so-called Fredholm integral equation of the first kind. For these, numerical approaches suffer from the fact that these integral equations are usually ``ill-posed''. There are, however, quite a few approaches.  The book \cite{Waz11} provides a detailed overview. Among these methods are the method of regularization and the homotopy perturbation method. The more recent review article \cite{YZ19} presents even more methods such as the wavelet method. The paper \cite{CMW19} presents an algorithm to numerically solve Fredholm equations of the first kind and applies it to the FPT problem and the same integral equation as used in this paper (although the notation differs slightly). Due to the large number of results and methods, we also refer the interested reader to the literature cited in the above sources.

From a mathematical perspective, it seems natural to generalise this approach to the $n$-dimensional case, i.e., to the case of determining the distribution of the first passage time of a $n$-dimensional Brownian motion to some $n$-dimensional surface. As there is no canonical generalisation, we will present a possible approach in this section for future research. To this end, let $W_t = (W_t^{1},\ldots,W_t^{n})$ be an $n$-dimensional standard Brownian motion.
%\paragraph{Variant: parametrization via $\textbf{x}$}
Let $d:\rr^{n} \to [0, \infty]$ be a continuous function such that $\tau :=\inf\{t\mid d(W_t)\geq t\}$ is an almost surely finite stopping time. A visualization of the setting can be seen in Figure \ref{fig:setting2}. We denote by $h:\rr^{n}\to \rr$ the density of the distribution of $W_{\tau}$. 
As above we decompose the transition kernel $p$ via first passage of $d$. Note that in contrast to the function $b$ above, $d$ maps from \textit{space} to \textit{time}. We have
\begin{align*}
p((0,0),(t,\textbf{x})) 
&= \int_{d^{-1}([0,t])}p\bigl((d(\textbf{y}),\textbf{y}),(t,\textbf{x})\bigr)P(W_\tau\in \d \textbf{y})  \\
&\quad+ P(W_t\in \d x, \tau>t)
\end{align*}
and 
\begin{align*}
1 
&= \frac{\int_{d^{-1}([0,t])}p((d(\textbf{y}),\textbf{y}),(t,\textbf{x})h(\textbf{y})\d \textbf{y}}{p((0,0),(t,\textbf{x}))}  
+ \frac{P(W_t\in \d x, \tau>t)}{p((0,0),(t,\textbf{x}))}.
\end{align*}
We set $\textbf{c}\in \rr^{n}$, $\textbf{x} = \textbf{c} t$ and let $t\to \infty$. 
As before the last term vanishes, if we choose $c$ appropriately.
We are left with
\begin{align*}\lim_{t\to \infty}\int_{d^{-1}([0,t])}\frac{p((d(\textbf{y}),\textbf{y}),(t,\textbf{c}t)}{p((0,0),(t,\textbf{c}t))}h(\textbf{y})\d \textbf{y} 
= \int_{{d^{-1}([0,\infty))}}e^{-\frac{\norm{\textbf{c}}^{2}}{2}d(\textbf{y}) + \textbf{c}\cdot \textbf{y}}h(\textbf{y})\d \textbf{y},
\end{align*}
where $\cdot$ denotes the standard scalar product and $\norm{~}$ the corresponding norm.
If $d(\textbf{y}) = \infty$ the integrand vanishes, so we can write the resulting equation as
\begin{equation}\label{eq:paramtization1}
 1 = \int_{\rr^{n}}e^{-\frac{\norm{\textbf{c}}^{2}}{2}d(\textbf{y}) + \textbf{c}\cdot \textbf{y}}h(\textbf{y})\d \textbf{y}
\end{equation}
for $\textbf{c}\in \rr^{n}$ such that the integral exists. The density $f$ of the distribution of $\tau$ can be obtained via
\[f(t) = \int_{h^{-1}(t)}h(\textbf{y})\d \sigma\]
where $\d \sigma$ denotes integration over the (hyper) surface $h^{-1}(t)$. 
The proof of uniqueness seems to us to be possible in principle under suitable additional preconditions with the methods presented here, but it is beyond the scope of this paper.

    \begin{figure}[htbp]
        \centering
        \includegraphics[width=0.7\textwidth]{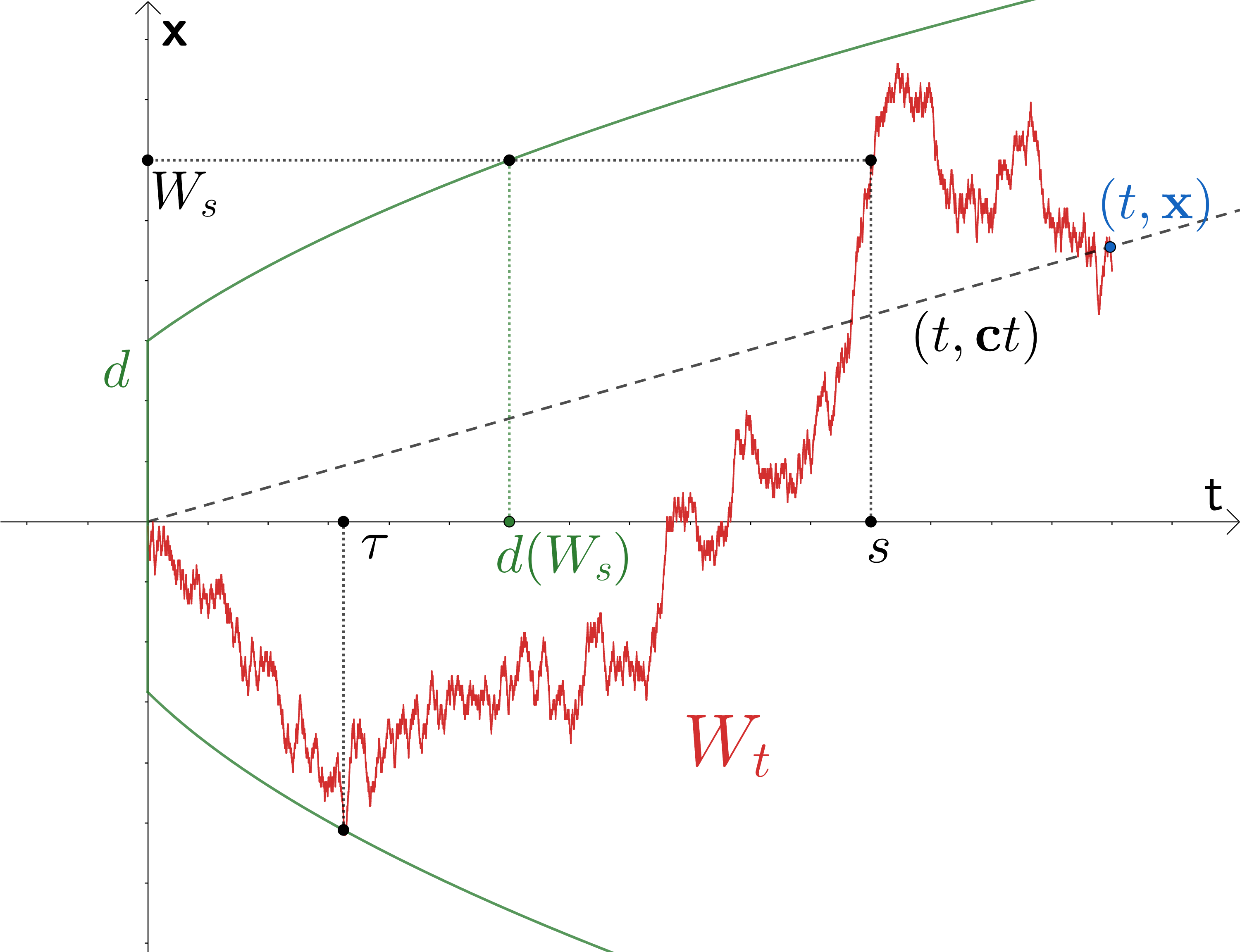}
        \caption{ One-dimensional visualization of the parametrization via $\textbf{x}$}
        \label{fig:setting2}
    \end{figure}
%\paragraph{Other variants}
%One sided hitting boundaries can also be defined via $t$.
%We consider a $C^{1}$ function $b:[0, \infty)\times \rr^{n-1} \to \rr$. Let $\tau := \inf\{t\mid W_t^{n} \geq b(t,W_t^{1},\ldots,W_t^{n-1})\}$ and $b$ such that $\tau$ is a.s.\ finite. For simplicity of notation we write
%$\mathbf{\tilde y}: = (y_1,\ldots,y_{n-1})$.
%Let $g:[0, \infty)\times \rr^{n-1} \to \rr$ be the density function $g(s,\mathbf{\tilde y}) = P(\tau \in \d s, W_\tau^{1}\in\d y_1,\ldots,W_\tau^{n-1}\in\d y_{n-1})$. In similar fashion as before we can derive
%\[1 = \int_{0}^{\infty}\int_{\rr^{n-1}}e^{-\frac{\norm{\textbf{c}}^{2}}{2}s + \textbf{c}\cdot \big(\mathbf{\tilde y},b(s,\mathbf{\tilde y})\big)}g(s,\mathbf{\tilde y})\d \mathbf{\tilde y}\]
%for $\textbf{c}\in \rr^{n}$ to be specified. We have
%$f(t) = \int_{\rr^{n-1}}g(t,\mathbf{\tilde y})\d \mathbf{\tilde y}$.\\
%More generally we can consider hitting of a (nice enough) manifold $B\subset [0, \infty)\times \rr^{n}$, i.e., $\tau := \inf\{t\mid (t,W_t)\in B\}$. Let $\mu$ be a measure on $B$ representing the distribution of $(\tau, W_\tau)$. Then the Fredholm integral equation should be of the form
%\[P(\tau < \infty) = \int_{B}e^{-\frac{\norm{\textbf{c}}^{2}}{2}s + \textbf{c}\cdot \textbf{y}}\d \mu(s,\textbf{y})\]
%for $\textbf{c}\in \rr^{n}$ to be specified.

%\bibliographystyle{alpha}
%\bibliography{Bibliography}

%\bibliography{Bibliography}

\begin{thebibliography}{CMW19}

\bibitem[ANG72]{ANG72} D. Ang and L. Knopoff. “A note on L1-approximations by exponential polynomials and Laguerre exponential polynomials”. In: \textit{Journal of Approximation Theory} 6.3 (1972), pp. 272–275.

\bibitem[Bre67]{Bre67} L. Breiman. “First exit times from a square root boundary”. In: \textit{Proc. Fifth Berkeley Sympos. Math. Statist. and Probability (Berkeley, Calif., 1965/66), Vol. II: Contributions to Probability Theory, Part 2}. Univ. California Press, Berkeley, Calif., 1967, pp. 9–16.

\bibitem[CMW19]{CMW19} M. Chae, R. Martin, and S. Walker. “On an algorithm for solving Fredholm integrals of the first kind”. In: \textit{Stat Comput} 29 (2019), pp. 645–654.

\bibitem[CF21]{CF21} S. Christensen and S. Fischer. \textit{A new integral equation for Brownian stopping problems with finite time horizon}. 2021. \textsc{url}: \url{https://arxiv.org/abs/2110.02655}.

\bibitem[Dan00]{Dan00} H. E. Daniels. “The first crossing-time density for Brownian motion with a perturbed linear boundary”. In: \textit{Bernoulli} 6.4 (2000), pp. 571–580.

\bibitem[Dur71]{Dur71} J. Durbin. “Boundary-crossing probabilities for the Brownian motion and Poisson processes and techniques for computing the power of the Kolmogorov-Smirnov test”. In: \textit{J. Appl. Probability} 8 (1971), pp. 431–453.

\bibitem[Fer82]{Fer82} B. Ferebee. “The tangent approximation to one-sided Brownian exit densities”. In: \textit{Z. Wahrsch. Verw. Gebiete} 61.3 (1982), pp. 309–326.

\bibitem[Gro89]{Gro89} P. Groeneboom. “Brownian motion with a parabolic drift and Airy functions”. In: \textit{Probab. Theory Related Fields} 81.1 (1989), pp. 79–109.

\bibitem[JKV09]{JKV09} S. Jaimungal, A. Kreinin, and A. Valov. \textit{Integral Equations and the First Passage Time of Brownian Motions}. 2009. \textsc{url}: \url{https://arxiv.org/abs/0902.2569}.

\bibitem[Khi33]{Khi33} A. Khintchine. \textit{Asymptotische Gesetze der Wahrscheinlichkeitsrechnung}. Vol. 1. Ergebnisse der Mathematik und Ihrer Grenzgebiete. 1. Folge. Springer Berlin, Heidelberg, 1933.

\bibitem[Ler86]{Ler86} H. R. Lerche. \textit{Boundary crossing of Brownian motion}. Vol. 40. Lecture Notes in Statistics. Its relation to the law of the iterated logarithm and to sequential analysis. Springer-Verlag, Berlin, 1986.

\bibitem[LRD02]{LRD02} V. S. F. Lo, G. O. Roberts, and H. E. Daniels. “Inverse method
of images”. In: \textit{Bernoulli} 8.1 (2002), pp. 53–80.

\bibitem[Nov81]{Nov81} A. A. Novikov. “A martingale approach to first passage problems and a new condition for Wald’s identity”. In: \textit{Stochastic Differential Systems}. Ed. by M. Arató, D. Vermes, and A. V. Balakrishnan. Berlin, Heidelberg: Springer Berlin Heidelberg, 1981, pp. 146–156.

\bibitem[NFK99]{NFK99} A. Novikov, V. Frishling, and N. Kordzakhia. “Approximations of boundary crossing probabilities for a Brownian motion”. In: \textit{J. Appl. Probab.} 36.4 (1999), pp. 1019–1030.

\bibitem[PP74]{PP74} C. Park and S. R. Paranjape. “Probabilities of Wiener paths crossing differentiable curves”. In: \textit{Pacific J. Math}. 53 (1974), pp. 579–583.

\bibitem[Pes02]{Pes02} G. Peskir. “On integral equations arising in the first-passage problem for Brownian motion”. In: \textit{J. Integral Equations Appl. 14.4 (2002), pp. 397–423.}

\bibitem[RSS84]{RSS84} L. M. Ricciardi, L. Sacerdote, and S. Sato. “On an integral equation for first-passage-time probability densities”. In: \textit{J. Appl. Probab.} 21.2 (1984), pp. 302–314.

\bibitem[Sal88]{Sal88} P. Salminen. “On the first hitting time and the last exit time for a Brownian motion to/from a moving boundary”. In: \textit{Adv. in Appl. Probab.} 20.2 (1988), pp. 411–426.

\bibitem[She67]{She67} L. A. Shepp. “A first passage problem for the Wiener process”. In: \textit{Ann. Math. Statist}. 38 (1967), pp. 1912–1914.

\bibitem[Smi72]{Smi72} C. S. Smith. “A note on boundary-crossing probabilities for the Brownian motion”. In: \textit{J. Appl. Probability} 9 (1972), pp. 857–861.

\bibitem[Waz11]{Waz11} A.-M. Wazwaz. \textit{Linear and Nonlinear Integral Equations: Methods and Applications.} 1st. Springer Publishing Company, Incorporated, 2011.

\bibitem[YZ19]{YZ19} D. Yuan and X. Zhang. “An overview of numerical methods for the first kind Fredholm integral equation”. In: \textit{SN Applied Sciences} (2019).

\bibitem[Zip13]{Zip13} P. Zipkin. “Linear programming and the inverse method of images”. In: \textit{Ann. Oper. Res.} 208 (2013), pp. 227–243.

\end{thebibliography}
%\printbibliography

\end{document}